
\documentclass{amsart}
\usepackage{amssymb}
\usepackage{latexsym}
\usepackage{graphicx}
\usepackage{hyperref}

\newcommand{\Z}{{\mathbb Z}}
\newcommand{\R}{{\mathbb R}}

\newcommand{\N}{{\mathbb N}}

\newtheorem{lemma}{Lemma}[section]
\newtheorem{theorem}[lemma]{Theorem}

\newcommand{\nn}{\nonumber}
\newcommand{\be}{\begin{equation}}
\newcommand{\ee}{\end{equation}}

\newcommand{\ti}{\tilde}

\newcommand{\eps}{\varepsilon}


\numberwithin{equation}{section}


\begin{document}

\title{A family of Schr\"odinger operators whose spectrum is an interval}

\author[H.\ Kr\"uger]{Helge Kr\"uger}
\address{Department of Mathematics, Rice University, Houston, TX~77005, USA}
\email{helge.krueger@rice.edu}

\thanks{H.\ K.\ was supported by NSF grant DMS--0800100.}

\date{\today}

\keywords{Schr\"odinger Operators, Spectrum}
\subjclass[2000]{Primary 81Q10; Secondary 47B36}

\begin{abstract}
 By approximation, I show that the spectrum of the Schr\"odinger operator with
 potential $V(n) = f(n^\rho \pmod 1)$ for $f$ continuous
 and $\rho > 0$, $\rho \notin \N$ is an interval.
\end{abstract}

\maketitle

\section{Introduction}

In this short note, I wish to describe a family of Schr\"odinger
operators on $l^2(\N)$ whose spectrum is an interval. To set
the stage introduce for a bounded sequence $V: \N\to\R$ and
$u \in l^2(\N)$ the Schr\"odinger operator $H_V$ defined
by
\begin{align}
 (H_V u) (n) & = u(n+1) + u(n-1) + V(n) u(n) & n\geq 2\\
 \nn (H_V u) (1) & = u(2) + V(1) u(1).
\end{align}
We will denote by $\sigma(V)$ the spectrum of the operator
$H_V$.

It is well known that if $V(n)$ is a sequence of independent
identically distributed random variables with distribution $\mu$
satisfying $\mathrm{supp}(\mu) = [a,b]$, we
have that for almost every $V$ the spectrum $\sigma(V)$ is $[a - 2, b + 2]$.
For the Almost--Mathieu Operator with potential
\be
 V_{\lambda,\alpha,\theta}(n) = 2 \lambda \cos(2 \pi (n \alpha + \theta)),
\ee
where $\lambda > 0$, $\alpha \notin \mathbb{Q}$, the
set $\sigma(V_{\lambda,\alpha,\theta})$ contains no interval \cite{aj}.

Bourgain conjectured in \cite{bbook}, that if one considers
the potential
\be
 V_{\lambda, \alpha, x, y} (n) = \lambda \cos \left(\frac{n (n-1)}{2} \alpha + n x + y \right)
\ee
with $\lambda > 0$, $\alpha \notin \mathbb{Q}$, the spectrum 
$\sigma(V_{\lambda, \alpha, x, y})$ is an interval.

Denote by $\mathbb{T} = \mathbb{R} / \mathbb{Z}$ the circle.
I will prove the following result

\begin{theorem}\label{thm:main}
 For any continuous function $f: \mathbb{T} \to \R$,
 any $\alpha\neq 0$, $\theta$, and $\rho > 0$ not an integer,
 introduce the potential
 \be\label{eq:vrho}
  V(n) = f(\alpha n^\rho + \theta).
 \ee
 Then we have that
 \be
  \sigma(V) = [\min(f) - 2, \max(f) + 2].
 \ee
\end{theorem}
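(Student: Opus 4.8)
\emph{Plan of proof.} The plan is as follows. The inclusion $\sigma(V)\subseteq[\min(f)-2,\max(f)+2]$ is immediate from form bounds: writing $H_V=\Delta+M_V$ with $\Delta u(n)=u(n+1)+u(n-1)$ the free half-line Laplacian (so $-2\le\Delta\le2$) and $M_V$ multiplication by $V$ (so $\min(f)\le M_V\le\max(f)$, since $\min(f)\le V(n)\le\max(f)$), one gets $\min(f)-2\le H_V\le\max(f)+2$ as quadratic forms. For the reverse inclusion, note that $f$ is continuous on the connected space $\T$, so $\Ran(f)=[\min(f),\max(f)]$; hence it suffices to show that $v+E\in\sigma(V)$ for every $v\in\Ran(f)$ and every $E\in(-2,2)$. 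The strategy I would use is to locate, arbitrarily far out, long blocks on which $V$ is nearly constant and equal to $v$, and then to feed a truncated plane wave supported in such a block into the Weyl criterion.

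The heart of the matter — and the step I expect to be the main obstacle — is the following number-theoretic lemma: \emph{for every $x_0\in\T$, every $L\in\N$, and every $\delta>0$ there are arbitrarily large $N$ with $\|\alpha(N+j)^\rho+\theta-x_0\|_\T<\delta$ for all $0\le j\le L$.} I would prove it by Taylor expansion about $N$: for fixed $j$ and $N>2|j|$,
\[
 \alpha(N+j)^\rho=\sum_{i=0}^{\lfloor\rho\rfloor}\binom{\rho}{i}\alpha\,j^i N^{\rho-i}+R_N(j),\qquad R_N(j)=O\!\left(|j|^{\lfloor\rho\rfloor+1}N^{\rho-\lfloor\rho\rfloor-1}\right)\xrightarrow[N\to\infty]{}0,
\]
every omitted term having negative exponent $\rho-i<0$ (this is where $\rho\notin\N$ enters). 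So it is enough to choose $N\to\infty$ so that, modulo $1$, $\binom{\rho}{0}\alpha N^\rho$ is close to $x_0-\theta$ and $\binom{\rho}{i}\alpha N^{\rho-i}$ is close to $0$ for $1\le i\le\lfloor\rho\rfloor$; then $\theta+\sum_i\binom{\rho}{i}\alpha j^iN^{\rho-i}\to x_0$ in $\T$ for each fixed $j$. Such $N$ exist because the sequence
\[
 n\mapsto\Big(\tbinom{\rho}{0}\alpha n^{\rho},\ \tbinom{\rho}{1}\alpha n^{\rho-1},\ \dots,\ \tbinom{\rho}{\lfloor\rho\rfloor}\alpha n^{\rho-\lfloor\rho\rfloor}\Big)
\]
is equidistributed in $\T^{\lfloor\rho\rfloor+1}$: by Weyl's criterion this reduces, for each $(m_i)\in\Z^{\lfloor\rho\rfloor+1}\setminus\{0\}$, to the equidistribution mod $1$ of $\sum_i m_i\binom{\rho}{i}\alpha\,n^{\rho-i}$, whose leading term is $c\,n^{\rho-i_0}$ with $c=m_{i_0}\binom{\rho}{i_0}\alpha\ne0$ (using $\alpha\ne0$ and $\binom{\rho}{i_0}\ne0$, as $\rho$ is not an integer) and exponent $\rho-i_0>0$ not an integer, $i_0$ being the least index with $m_{i_0}\ne0$; such generalized power sums are equidistributed by the classical theorem on $\{c\,n^{\beta}\}$ with $\beta>0$, $\beta\notin\N$ (lower-order powers do not affect this, e.g.\ by van der Corput's difference theorem and Fej\'er's theorem).

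Granting the lemma, fix $v=f(x_0)\in\Ran(f)$ and $E\in(-2,2)$, and write $E=2\cos(2\pi\xi)$. Take $\chi\in C^\infty$ with $\supp\chi\subseteq[0,1]$ and $\int|\chi|^2=1$, and set $\phi_{N,L}(n)=L^{-1/2}\chi\!\left(\tfrac{n-N}{L}\right)\E^{2\pi\I\xi n}$, which is supported in $[N,N+L]$ and has norm $1+o(1)$ as $L\to\infty$. Since $\E^{2\pi\I\xi n}$ solves $\Delta\psi=2\cos(2\pi\xi)\psi$, a one-line estimate on the second difference of a cutoff varying on scale $L$ gives $\|(\Delta-2\cos(2\pi\xi))\phi_{N,L}\|=O(1/L)$, while $\|(M_V-v)\phi_{N,L}\|\le\sup_{N\le n\le N+L}|f(\alpha n^\rho+\theta)-f(x_0)|$; by the lemma and the uniform continuity of $f$ this supremum can be made $o(1)$ by choosing $N=N(L)\to\infty$ suitably. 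Hence $\|(H_V-(v+E))\phi_{N(L),L}\|\to0$, so $v+E\in\sigma(V)$. Letting $E$ run over $(-2,2)$ and $v$ over $[\min(f),\max(f)]$ and taking the closure gives $[\min(f)-2,\max(f)+2]\subseteq\sigma(V)$. Equivalently, the lemma says that the constant potential $\equiv v$ is a right limit of $V$, whence $[v-2,v+2]=\sigma(\Delta_{\Z}+v)\subseteq\sigma(V)$ by the standard fact that the spectrum of any right limit of a bounded potential is contained in the spectrum of the operator.
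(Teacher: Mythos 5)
Your proposal is correct and follows essentially the same route as the paper: Taylor-expand $\alpha(n+k)^\rho$ about $n$, use that the non-constant Taylor coefficients can be made simultaneously small modulo $1$ while the constant one hits any prescribed target (this is exactly the special case $a_1=\dots=a_r=0$ of the paper's Lemma~\ref{lem:main}), and conclude by a Weyl-sequence argument, which you helpfully spell out. The only substantive difference is that you derive the needed equidistribution of $\bigl(\binom{\rho}{i}\alpha n^{\rho-i}\bigr)_{i=0}^{\lfloor\rho\rfloor}$ from the classical Weyl criterion together with van der Corput differencing and Fej\'er's theorem, whereas the paper quotes Boshernitzan's Hardy-field theorem; both are valid, and your version is self-contained but proves only the constant-target case, which suffices for Theorem~\ref{thm:main} though not for the polynomial targets used in Theorem~\ref{thm:ac}.
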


Potentials of the type \eqref{eq:vrho}, where already discussed in
Bourgain \cite{b}, Griniasty--Fishman \cite{gf}, Last--Simon \cite{ls}, and Stolz \cite{s}.
In particular, the case $0 < \rho < 1$ is due to Stolz \cite{s}
under an additional regularity assumption on $f$.
The proof of this theorem depends essentially on the following
lemma on the distribution of $n^\rho$,
which is a consequence of a result of Boshernitzan \cite{bosh}.

\begin{lemma}\label{lem:main}
 Let $r \geq 0$ be an integer and $r < \rho < r+1$. Given any
 $\alpha \neq 0$, $\theta$, $K \geq 1$, $\eps > 0$, $a_0, \dots, a_r$, there exists
 an integer $n \geq 1$ such that
 \be
  \sup_{|k| \leq K} \| \alpha (n + k)^\rho + \theta - \sum_{j=0}^r a_j k^j\| \leq \eps,
 \ee
 where $\|x\|= \mathrm{dist}(x, \Z)$ denotes the distance to the closest integer.
\end{lemma}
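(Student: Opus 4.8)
The plan is to Taylor-expand $\alpha(n+k)^\rho$ in $k$ about $n$, discard the negligible high-order tail, and then choose $n$ so that the first $r+1$ Taylor coefficients land near prescribed values modulo $1$; Boshernitzan's theorem supplies the last step. Throughout, $f$ plays no role — the lemma is purely about the sequence $\alpha n^\rho+\theta$.

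First I would write, for $n>K$ and $|k|\le K$, $\alpha(n+k)^\rho=\sum_{j=0}^{r}\alpha\binom{\rho}{j}n^{\rho-j}k^{j}+T_n(k)$ with $T_n(k)=\sum_{j\ge r+1}\alpha\binom{\rho}{j}n^{\rho-j}k^{j}$. Since $\rho-r-1<0$, the $(r+1)$-st derivative of $t\mapsto\alpha t^\rho$ is bounded on $[n-K,\infty)$, so the Lagrange form of the remainder gives $\sup_{|k|\le K}|T_n(k)|\le C_{\rho,\alpha}K^{r+1}(n-K)^{\rho-r-1}\to 0$, and I can fix $N_0>K$ with $\sup_{|k|\le K}|T_n(k)|\le\eps/2$ for all $n\ge N_0$. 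Next, set $\delta=\eps/(2(r+1)K^{r})$ and $b_0(n)=\alpha n^\rho$, $b_j(n)=\alpha\binom{\rho}{j}n^{\rho-j}$ for $1\le j\le r$. Suppose $n\ge N_0$ satisfies $\|b_0(n)-(a_0-\theta)\|\le\delta$ and $\|b_j(n)-a_j\|\le\delta$ for $1\le j\le r$; write each such difference as $M_j+\eta_j$ with $M_j\in\Z$ and $|\eta_j|\le\delta$. Because $k\in\Z$, the term $\sum_{j=0}^r M_j k^j$ is an integer, so $\alpha(n+k)^\rho+\theta-\sum_{j=0}^r a_jk^j=\sum_{j=0}^r M_jk^j+\sum_{j=0}^r \eta_jk^j+T_n(k)$ has distance to $\Z$ at most $\delta\sum_{j=0}^r|k|^j+|T_n(k)|\le\delta(r+1)K^{r}+\eps/2=\eps$ for every $|k|\le K$. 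Hence it suffices to produce one such $n$.

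For that, it is enough that the sequence $n\mapsto(b_0(n),\dots,b_r(n))$ be uniformly distributed modulo $1$ in $\T^{r+1}$: discarding its first $N_0$ terms leaves a dense sequence, which then meets the nonempty open box with centre $(a_0-\theta,a_1,\dots,a_r)$ and half-width $\delta$. Here I would note that $\binom{\rho}{j}=\rho(\rho-1)\cdots(\rho-j+1)/j!\ne 0$ for $0\le j\le r$ since $\rho>r$ is not an integer, apply Weyl's criterion to reduce to the one-dimensional uniform distribution modulo $1$ of $\sum_{j=0}^r m_j b_j(n)$ for each $(m_0,\dots,m_r)\in\Z^{r+1}\setminus\{0\}$, and observe that this is the restriction to $\N$ of the Hardy-field function $g(t)=\sum_{j\ge j_0}m_j\alpha\binom{\rho}{j}t^{\rho-j}$, where $j_0$ is the least index with $m_{j_0}\ne 0$. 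Its leading term $m_{j_0}\alpha\binom{\rho}{j_0}t^{\rho-j_0}$ is a positive non-integer power of $t$ with nonzero coefficient, and since distinct exponents $\rho-j$ produce no cancellation, $g(t)-P(t)$ still contains an uncancelled positive power of $t$ for every polynomial $P$ (either $t^{\rho-j_0}$ or the leading monomial of $P$); hence $|g(t)-P(t)|/\log t\to\infty$. Boshernitzan's theorem \cite{bosh} then gives that $(g(n))_n$ is uniformly distributed modulo $1$, which completes the argument.

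The one genuinely delicate point is the reduction in the second paragraph: the entire lemma collapses to controlling the individual Taylor coefficients modulo $1$, and this collapse is legitimate only because $k$ ranges over integers, so that the integer parts of those coefficients drop out of $\|\cdot\|$. Granting Boshernitzan's criterion the equidistribution step is routine and the tail estimate is elementary, so the real content — and the place where the hypothesis $\rho\notin\N$ is indispensable — is the verification that a power with non-integer exponent stays far (faster than $\log t$) from every polynomial.
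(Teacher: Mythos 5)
Your proposal is correct and follows essentially the same route as the paper: Taylor expansion of $\alpha(n+k)^\rho$ in $k$ with a vanishing remainder for large $n$, equidistribution of the coefficient vector $(x_0(n),\dots,x_r(n))$ modulo $1$ via Boshernitzan's criterion (checking that $\sum_j m_j x_j(n)-p(n)$ grows like a positive power of $n$, faster than $\log n$), and the observation that integrality of $k$ lets the integer parts of the coefficients drop out of $\|\cdot\|$. Your uniform tolerance $\delta=\eps/(2(r+1)K^r)$ in place of the paper's $j$-dependent $\eps/(2(r+1)K^j)$ is an immaterial difference.
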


We will prove this lemma in the next section.
$\| . \|$ is not a norm, but it obeys the triangle inequality
\be\label{eq:triangle}
 \| x + y \| \leq \|x \| + \|y\|
\ee
for any $x,y \in \R$. In particular for any integer $N$, we have that
$\| N x \| \leq |N| \|x\|$.

\begin{proof}[Proof of Theorem~\ref{thm:main}]
 By Lemma~\ref{lem:main}, we can find for any $x \in [0,1)$ a sequence $n_l$
 such that
 $$
  \sup_{|k| \leq l} \| \alpha (n_l + k)^\rho + \theta - x\| \leq 1/l.
 $$
 Hence, the sequence $V_l(n) = V(n - n_l)$ converges pointwise
 to $f(x)$. The claim now follows from a Weyl--sequence argument.
\end{proof}
 
It is remarkable that combined with the Last--Simon semicontinuity
of the absolutely continuous spectrum \cite{ls}, one also obtains
the following result

\begin{theorem}\label{thm:ac}
 For $r \geq 0$ an integer, $r < \rho < r + 1$, and $f$ a continuous
 function on $\mathbb{T}$, introduce the set $\mathcal{B}_r(f)$ as
 \be
  \mathcal{B}_r (f) = \bigcap_{a_0, \dots, a_r} \sigma_{\mathrm{ac}} (f( \sum_{j = 0}^{r} a_j n^j)).
 \ee
 Then for $\alpha\neq 0$ and any $\theta$
 \be\label{eq:ac}
  \sigma_{\mathrm{ac}}( f(\alpha n^\rho + \theta)) \subseteq \mathcal{B}_r(f).
 \ee
 Here $\sigma_{\mathrm{ac}}(V)$ denotes the absolutely continuous
 spectrum of $H_V$.
\end{theorem}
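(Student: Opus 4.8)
The plan is to combine Lemma~\ref{lem:main} with the Last--Simon semicontinuity theorem for absolutely continuous spectrum, exactly as the theorem statement advertises. Recall that Last--Simon's result says the following: if $V_l \to V_\infty$ pointwise (with a uniform bound on $\sup_n |V_l(n)|$), then
\be
 \sigma_{\mathrm{ac}}(V_\infty) \supseteq \limsup_{l \to \infty} \sigma_{\mathrm{ac}}(V_l),
\ee
or in the form we actually need: whatever lies in $\sigma_{\mathrm{ac}}(V_l)$ for all large $l$ survives into $\sigma_{\mathrm{ac}}$ of a pointwise limit. Because the right half of \eqref{eq:ac} is an intersection over \emph{all} polynomial coefficients $a_0,\dots,a_r$, it suffices to fix one tuple $(a_0,\dots,a_r)$ and show
\be\label{eq:acgoal}
 \sigma_{\mathrm{ac}}(f(\alpha n^\rho + \theta)) \subseteq \sigma_{\mathrm{ac}}\Bigl(f\bigl(\textstyle\sum_{j=0}^r a_j n^j\bigr)\Bigr).
\ee

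The key step is to produce, for the fixed tuple $(a_0,\dots,a_r)$, a sequence of shifts of the potential $V(n) = f(\alpha n^\rho + \theta)$ that converges pointwise to the polynomial potential $W(n) = f(\sum_j a_j n^j)$. By Lemma~\ref{lem:main} applied with $K = l$ and $\eps = 1/l$, there is an integer $n_l \geq 1$ with
\be
 \sup_{|k| \leq l} \Bigl\| \alpha (n_l + k)^\rho + \theta - \sum_{j=0}^r a_j k^j \Bigr\| \leq \frac{1}{l}.
\ee
Now set $V_l(n) = V(n_l + n - 1)$ for $n \geq 1$ (a right-translate of $V$ by $n_l - 1$). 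Writing $k = n - 1$, for each fixed $n$ and all $l \geq n - 1$ we get $\|\alpha(n_l + n - 1)^\rho + \theta - \sum_j a_j (n-1)^j\| \leq 1/l \to 0$, so by uniform continuity of $f$ on $\mathbb{T}$ we obtain $V_l(n) \to f(\sum_j a_j (n-1)^j) = W(n-1)$ pointwise in $n$. Here I am using that $f$ is $1$-periodic, so its values depend only on the fractional part of the argument, which is what the $\|\cdot\|$ control gives. Note that translation-by-one of $W$ leaves its absolutely continuous spectrum unchanged (it is a rank-one perturbation, or simply re-index the polynomial), so the limit may as well be taken to be $W$ itself up to replacing $(a_0,\dots,a_r)$ by the shifted coefficients, which range over the same set; hence no loss.

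Two routine points remain. First, each $V_l$ is a translate of $V$, and translating the potential of a half-line Schr\"odinger operator by a finite amount changes $H_V$ by a finite-rank (indeed rank-one, after peeling off one coordinate at a time) perturbation, which preserves $\sigma_{\mathrm{ac}}$; so $\sigma_{\mathrm{ac}}(V_l) = \sigma_{\mathrm{ac}}(V)$ for every $l$. Second, $\sup_{n,l}|V_l(n)| \leq \|f\|_\infty < \infty$, so the Last--Simon hypothesis is met. Feeding $V_l \to W(\cdot - 1)$ into semicontinuity gives $\sigma_{\mathrm{ac}}(W(\cdot-1)) \supseteq \limsup_l \sigma_{\mathrm{ac}}(V_l) = \sigma_{\mathrm{ac}}(V)$, which is \eqref{eq:acgoal}. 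Intersecting over all tuples $(a_0,\dots,a_r)$ yields \eqref{eq:ac}.

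The only genuine subtlety — the ``hard part'' — is bookkeeping the direction of the translation and the index shift so that the pointwise limit is \emph{exactly} a polynomial potential of the allowed form, rather than something with a stray shift that could a priori sit outside the intersection. This is handled by observing that the family $\{\sum_{j=0}^r a_j n^j : a_j \in \mathbb{R}\}$ is invariant under $n \mapsto n - c$ for integer $c$ (binomial expansion just reshuffles the coefficients), so the intersection defining $\mathcal{B}_r(f)$ is translation-invariant and the stray shift is harmless. Everything else is a direct citation of Lemma~\ref{lem:main}, uniform continuity, and the Last--Simon semicontinuity theorem.
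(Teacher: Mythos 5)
Your argument is correct and is exactly the one the paper intends (the paper offers no written proof beyond the remark that the theorem follows by combining the approximation of Lemma~\ref{lem:main} with Last--Simon semicontinuity): translate $V$ by the $n_l$ produced by Lemma~\ref{lem:main} so that the translates converge pointwise to a polynomial potential of degree $\leq r$, note that the degree-$r$ polynomial family is closed under the index shift, and invoke Last--Simon. One caution: the general paraphrase you give of Last--Simon --- $V_l \to V_\infty$ pointwise implies $\sigma_{\mathrm{ac}}(V_\infty) \supseteq \limsup_l \sigma_{\mathrm{ac}}(V_l)$ --- is false for arbitrary bounded sequences (truncate a potential with empty a.c.\ spectrum to get approximants with a.c.\ spectrum $[-2,2]$), but your actual application, translates of a single fixed potential by amounts tending to infinity converging to a right limit, is precisely the hypothesis of the Last--Simon theorem, so the proof stands as written.
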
 

We note that for $r = 0$, we have that
\be
 \mathcal{B}_0 (f) = [-2 + \max(f), 2 - \min(f)].
\ee
Under additional regularity assumptions on $f$ and $r=0$, Stolz has
shown in \cite{s} that we have equality in \eqref{eq:ac}.

Furthermore note that
\be
 \mathcal{B}_{r+1}(f) \subseteq \mathcal{B}_r(f).
\ee
In \cite{ls}, Last and Simon have stated the following conjecture
\be
 \mathcal{B}_{1} (2 \lambda \cos(2 \pi .)) = \emptyset
\ee
for $\lambda > 0$. They phrased this in poetic terms as
'\textit{Does Hofstadter's Butterfly have wings?}'. The best
positive result in this direction as far as I know, is
by Bourgain \cite{b} showing
\be
 |\mathcal{B}_1 (2 \lambda \cos(2 \pi .))| \to 0, \quad \lambda \to 0.
\ee

It is an interesting question if Theorem~\ref{thm:main} holds
for $\rho \geq 2$ an integer. In the particular case of $f(x) = 2 \lambda \cos(2 \pi x)$, $\rho = 2$,
this would follow from Bourgain's conjecture. However,
there is also the following negative evidence. Consider
the skew-shift $T: \mathbb{T}^2 \to \mathbb{T}^2$ given by
\begin{align}
 T(x,y) &= (x + \alpha, x + y), \\
 \nn T^n(x,y) &= (x + n \alpha, \frac{n(n-1)}{2} \alpha + n x + y),
\end{align}
where $\alpha \notin \mathbb{Q}$. Then Avila, Bochi, and Damanik \cite{abd1}
have shown that for generic continuous $f:\mathbb{T}^2 \to \R$, the spectrum
$\sigma(f(T^n(x,y)))$ contains no interval. So, it is not
clear what to expect in this case.

As a final remark, let me comment on a slight extension.
If one replaces $V$ by the following family of potentials
$$
 V(n) = f(\alpha n^\rho + \sum_{k=1}^{K} \alpha_k n^{\beta_k}),
$$
where $\beta_k < \rho$ and $\alpha_k$ are any numbers,
then Theorem~\ref{thm:main} and \ref{thm:ac} remain valid.

\section{Proof of Lemma~\ref{lem:main}}

Let in the following $r$ be an integer such that
$r < \rho < r +1$. By Taylor expansion, we have that 
\be\label{eq:taylor}
 \alpha (n + k)^\rho = \sum_{j = 0}^{r} x_j(n) k^j + \alpha \frac{\rho \dots (\rho -r)}{(r+1)!} (n + \xi)^{\rho - r - 1} k^{r + 1}
\ee
for some $|\xi| \leq k$ and
\be
 x_j(n) = \alpha \frac{\rho \dots (\rho - j + 1)}{j!} n^{\rho - j}.
\ee
We now first note the following lemma

\begin{lemma}\label{lem:taylor}
 For any $K \geq 1$ and $\eps > 0$, there exists an $N_0(K, \eps)$
 such that
 \be
  |\alpha (n + k)^\rho - \sum_{j = 0}^{r} x_j(n) k^j| \leq \eps
 \ee
 for $|k| \leq K$ and $n \geq N_0(K,\eps)$.
\end{lemma}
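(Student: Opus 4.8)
The plan is to read off the estimate directly from the Lagrange form of the Taylor remainder already recorded in \eqref{eq:taylor}. Subtracting $\sum_{j=0}^r x_j(n)k^j$ from both sides of \eqref{eq:taylor} leaves
\[
 \alpha(n+k)^\rho - \sum_{j=0}^r x_j(n)k^j = \alpha\,\frac{\rho(\rho-1)\cdots(\rho-r)}{(r+1)!}\,(n+\xi)^{\rho-r-1}k^{r+1}
\]
for some $\xi$ with $|\xi|\leq |k|\leq K$. So the entire content of the lemma is that this remainder term tends to zero uniformly in $|k|\leq K$ as $n\to\infty$.

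First I would restrict to $n > K$, which is harmless since we are free to enlarge $N_0(K,\eps)$; this guarantees $n+\xi \geq n-K > 0$, so that the Taylor expansion of $t\mapsto \alpha(n+t)^\rho$ is legitimate on the whole segment between $0$ and $k$ and the power $(n+\xi)^{\rho-r-1}$ is well defined. Next I would use the two trivial bounds $|k^{r+1}|\leq K^{r+1}$ and, crucially using $\rho-r-1<0$ (which is exactly where the hypothesis $\rho<r+1$ enters), the monotonicity estimate $(n+\xi)^{\rho-r-1}\leq (n-K)^{\rho-r-1}$. Writing $C = C(\alpha,\rho,r) = |\alpha|\,|\rho(\rho-1)\cdots(\rho-r)|/(r+1)!$ for the constant prefactor, this yields
\[
 \Bigl|\alpha(n+k)^\rho - \sum_{j=0}^r x_j(n)k^j\Bigr| \leq C\,K^{r+1}\,(n-K)^{\rho-r-1}
\]
for all $|k|\leq K$ and $n>K$.

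Finally, since the right-hand side is independent of $k$ and tends to $0$ as $n\to\infty$ (once more because the exponent $\rho-r-1$ is negative), one simply takes $N_0(K,\eps)$ so large that $C\,K^{r+1}(n-K)^{\rho-r-1}\leq \eps$ for all $n\geq N_0(K,\eps)$; explicitly $N_0(K,\eps) = K + (C\,K^{r+1}/\eps)^{1/(r+1-\rho)}$ works. There is no genuine obstacle in this argument: the only two points requiring a moment's care are ensuring that the base $n+\xi$ stays positive (handled by the restriction $n>K$) and keeping track of the fact that the negative sign of the exponent $\rho-r-1$ is precisely what makes both the monotonicity step and the final limit go through.
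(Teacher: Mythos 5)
Your argument is correct and is exactly the one the paper intends: the paper's proof is the one-line remark that the lemma ``follows from \eqref{eq:taylor} and that $\rho - r - 1 < 0$,'' and you have simply written out the uniform bound $C\,K^{r+1}(n-K)^{\rho-r-1}$ on the Lagrange remainder that this remark leaves implicit. No discrepancies; your explicit choice of $N_0(K,\eps)$ is a harmless bonus.
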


\begin{proof}
 This follows from \eqref{eq:taylor} and that $\rho - r - 1 < 0$.
\end{proof}

A sequence $x(n)$ is called uniformly
distributed in $\mathbb{T}^{r+1}$ if for any $0 \leq a_j < b_j \leq 1$, $j = 0, \dots, r$
we have that
\be
 \lim_{n \to \infty} \frac{1}{n} \# \{1 \leq k \leq n:\, x_j(k) \in (a_j, b_j),\quad j=0,\dots,r\} =
 \prod_{j=0}^{r} (b_j - a_j).
\ee
If $x(n)$ is a sequence in $\R^{r+1}$, we can view it as a sequence in $\mathbb{T}^{r+1}$
by considering $x(n) \pmod 1$, and call it uniformly distributed if $x(n) \pmod 1$ is.
We need the following consequence of Theorem~1.8 in \cite{bosh}. 

\begin{theorem}[Boshernitzan]
 Let $(f_1, \dots, f_s)$ be functions $\R \to \R$ of subpolynomial growth,
 that is, there is an integer $N$ such that
 \be
  \lim_{x \to \infty} f_j(x) x^{-N} = 0,\quad 1 \leq j \leq s.
 \ee
 Then the following two conditions are equivalent
 \begin{enumerate}
  \item The sequence $\{f_1(n), \dots, f_s(n)\}_{n\geq 1}$ is uniformly distributed
   in $\mathbb{T}^s$.
  \item For any $(m_1, \dots, m_s) \in \mathbb{Z}^s \backslash \{0\}$, and for every
   polynomial $p(x)$ with rational coefficients, we have that
   \be
    \lim_{x \to \infty} \frac{ \sum_{j=1}^{s} m_j f_j(x) - p(x)}{\log(x)} = \pm \infty.
   \ee
 \end{enumerate}
\end{theorem}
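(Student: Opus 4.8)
The statement characterizes uniform distribution, so the natural engine is the Weyl criterion together with estimates for exponential (Weyl) sums; I would organize the proof around these two ingredients. As in Boshernitzan's setting I take the $f_j$ to lie in a common Hardy field: this is what equips the bare hypothesis of subpolynomial growth with the differentiability and eventual-monotonicity structure used below, and the power functions appearing in our application satisfy it. \emph{Step 1 (reduction to a scalar Weyl sum).} By the Weyl criterion for $\mathbb{T}^s$, the sequence $(f_1(n),\dots,f_s(n))$ is uniformly distributed if and only if for every $m=(m_1,\dots,m_s)\in\Z^s\setminus\{0\}$,
\[
 \frac{1}{N}\sum_{n=1}^{N}\E^{2\pi\I\sum_{j=1}^s m_j f_j(n)}\longrightarrow 0,\qquad N\to\infty.
\]
Writing $g_m=\sum_{j=1}^s m_j f_j$, which is again a Hardy-field function of subpolynomial growth, the theorem reduces to the one-variable dichotomy: the normalized sums $\frac1N\sum_{n\le N}\E^{2\pi\I g_m(n)}$ tend to $0$ if and only if $(g_m(x)-p(x))/\log x\to\pm\infty$ for every rational polynomial $p$. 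Condition (2) is exactly the conjunction of this scalar condition over all $m\neq 0$, so the multidimensional criterion factors into the scalar statement applied to each $g_m$.

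\emph{Step 2 (necessity).} I would prove $(1)\Rightarrow(2)$ by contraposition. If (2) fails there are $m$ and a rational polynomial $p$, of common denominator $q$, with $(g_m-p)/\log x\not\to\pm\infty$. By the Hardy-field property $(g_m-p)/\log x$ has a limit in $[-\infty,+\infty]$, so this limit is finite and $g_m(n)-p(n)=O(\log n)$ varies very slowly. Since $\E^{2\pi\I p(n)}$ is periodic in $n$ with some period $Q=Q(q)$, the slow variation of the fractional parts $\{g_m(n)-p(n)\}$ keeps the phases $\E^{2\pi\I g_m(n)}$ correlated along dyadic blocks, so the normalized sum does not tend to $0$ and the sequence is not uniformly distributed.

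\emph{Step 3 (sufficiency).} This is the substantial direction and carries the real content of the theorem. Here I would bound $\sum_{n\le N}\E^{2\pi\I g_m(n)}$ directly via the derivative tests for exponential sums. The Hardy-field hypothesis guarantees that each derivative $g_m^{(k)}$ is eventually monotone of constant sign, so that after finitely many applications of the van der Corput difference inequality the first- and second-derivative tests (or the Kusmin--Landau estimate) apply. The growth hypothesis (2) is precisely what excludes the resonant regime in which $g_m$ is well approximated by a rational polynomial: when $(g_m-p)/\log x\to\pm\infty$ for all such $p$, the governing derivative never settles near a rational value on the scale set by $N$, and the derivative tests then yield $\sum_{n\le N}\E^{2\pi\I g_m(n)}=o(N)$.

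\emph{Main obstacle.} The crux is Step 3, and within it the calibration of the $\log x$ threshold: one must show that the borderline between equidistribution and its failure falls exactly at growth rate $\log x$ relative to the rational polynomials. Tracking how the van der Corput iteration interacts with the Hardy-field asymptotics of $g_m$ and its derivatives, and proving that separation faster than $\log$ suffices while separation within $O(\log x)$ does not, is the technical heart of Boshernitzan's argument.
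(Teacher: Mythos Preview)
The paper does not prove this theorem at all: it is quoted as a consequence of Theorem~1.8 in \cite{bosh} and used as a black box, so there is no proof in the paper to compare your proposal against. Your outline---Weyl's criterion to reduce to scalar Weyl sums, then Hardy-field monotonicity plus van der Corput/derivative tests for sufficiency, and slow variation along arithmetic progressions for necessity---is indeed the shape of Boshernitzan's original argument, and you are right to flag that the bare ``subpolynomial growth'' hypothesis in the paper's statement is really a stand-in for membership in a Hardy field (which is what Boshernitzan actually assumes and what the applications $x_j(n)=c_j n^{\rho-j}$ satisfy).

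If you intend to supply a self-contained proof, be aware that your Step~3 is still only a heuristic: the genuine work is the case analysis on the comparison class of $g_m$ (whether $g_m$ grows faster than every polynomial, like a fixed power $x^\beta$ with $\beta\notin\Z$, or differs from a rational polynomial by a function whose growth sits between $\log x$ and $x$), and in each regime one must identify the correct derivative $g_m^{(k)}$ to feed into Kusmin--Landau or van der Corput. Simply asserting that ``the governing derivative never settles near a rational value'' glosses over exactly the stratification that makes the $\log x$ threshold appear. For the purposes of the present paper, however, none of this is needed: the result is invoked, not proved.
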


We will use the following consequence of this theorem

\begin{lemma}\label{lem:ud}
 The sequence
 \be
  x(n) = \begin{pmatrix} x_0(n) & \dots & x_r(n) \end{pmatrix}
 \ee
 is uniformly distributed in $\mathbb{T}^{r+1}$.
\end{lemma}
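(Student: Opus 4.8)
The plan is to verify condition (2) of Boshernitzan's theorem for the functions $f_j(x) = x_j(x) = \alpha \frac{\rho\cdots(\rho-j+1)}{j!} x^{\rho-j}$, $j = 0, \dots, r$, which are of subpolynomial growth since each exponent $\rho - j$ is a fixed real number (so certainly $x_j(x) x^{-N} \to 0$ for $N$ any integer exceeding $\rho$). One must check that for every nonzero integer vector $(m_0, \dots, m_r)$ and every polynomial $p$ with rational coefficients,
\be
 \lim_{x\to\infty} \frac{\sum_{j=0}^r m_j x_j(x) - p(x)}{\log x} = \pm\infty.
\ee

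The key observation is that the function $g(x) := \sum_{j=0}^r m_j x_j(x)$ is a linear combination of the power functions $x^{\rho}, x^{\rho-1}, \dots, x^{\rho-r}$ with nonzero coefficients exactly at those $j$ where $m_j \neq 0$. Let $j_*$ be the smallest index with $m_{j_*}\neq 0$; then the dominant term of $g(x)$ as $x\to\infty$ is $m_{j_*} \alpha \frac{\rho\cdots(\rho-j_*+1)}{j_*!} x^{\rho - j_*}$, whose exponent $\rho - j_*$ is strictly positive (since $j_* \leq r < \rho$) and is \emph{not an integer} (since $\rho\notin\N$). Meanwhile $p(x)$ is a polynomial, so its growth is governed by a nonnegative integer power of $x$. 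I would argue by cases: if $\deg p \leq \rho - j_*$ is impossible to match (the exponents $\rho-j_*$ and $\deg p$ being distinct — one non-integer, one integer), so either $g$ dominates $p$ or $p$ dominates $g$; in the first case $g(x) - p(x)$ grows like $x^{\rho-j_*}$, in the second like $x^{\deg p}$ with $\deg p \geq 1$ (if $\deg p = 0$ then $g$ dominates since $\rho - j_* > 0$). In all cases $g(x) - p(x) = c x^{\beta}(1 + o(1))$ for some $c \neq 0$ and $\beta > 0$, hence $(g(x)-p(x))/\log x \to \pm\infty$ according to the sign of $c$. Thus condition (2) holds, and Boshernitzan's theorem gives uniform distribution, which is the assertion of Lemma~\ref{lem:ud}.

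The main obstacle — really the only point requiring care — is the bookkeeping in comparing the non-integer power $x^{\rho - j_*}$ against an arbitrary rational polynomial $p$: one must be sure that no cancellation between $g$ and $p$ can occur at leading order, and this is precisely where $\rho\notin\N$ enters, since it forces $\rho - j_*$ to differ from every integer and in particular from $\deg p$. A minor subtlety is handling the case where $p \equiv 0$ or has degree $0$, but then $g(x)-p(x) \sim g(x)$ and $\rho - j_* > 0$ settles it. Note that the hypothesis $\rho > r$ (equivalently $j_* \leq r < \rho$) is exactly what guarantees the leading exponent $\rho - j_*$ is positive, so that the ratio with $\log x$ genuinely diverges rather than tending to $0$.
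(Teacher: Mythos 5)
Your proposal is correct and follows essentially the same route as the paper: both verify Boshernitzan's criterion by noting that $\sum_j m_j x_j(n) - p(n)$ is a nonzero combination of powers $n^{\rho-j}$ (non-integer exponents, so no cancellation with the rational polynomial $p$) and hence grows at least like $n^{\rho-r}$, which beats $\log n$. Your write-up merely spells out the case analysis that the paper's one-line proof leaves implicit.
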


\begin{proof}
 This follows from the fact that for any polynomial $p(n)$
 and integer vector $(m_0, \dots, m_r)$ we have that
 $$
  |\sum_{j=0}^r m_j x_j(n) - p(n)|
 $$
 grows at least like $n^{\rho - r}$, which grows faster than
 $\log(n)$.
\end{proof}

Now we come to

\begin{proof}[Proof of Lemma~\ref{lem:main}]
 By Lemma~\ref{lem:taylor}, there exists an $N_0$ such that
 $$
  \|\alpha (n + k)^\rho - \sum_{j = 0}^{r} x_j(n) k^j\| \leq \frac{\eps}{2}
 $$
 for any $n \geq N_0$ and $|k| \leq K$. By Lemma~\ref{lem:ud},
 we can now find $n \geq N_0$ such that
 $$
  \| x_l(n) - \ti{a}_l \| \leq \frac{\eps}{2 (r+1) K^l},\quad l=0,\dots,r,
 $$
 where $\ti{a}_0 = a_0 - \theta$ and $\ti{a}_l = a_l$, $l\geq 1$. 
 For $|k| \leq K$, we now have that using \eqref{eq:triangle}
 \begin{align}
  \nn \|\alpha (n + k)^\rho & + \theta - \sum_{j = 0}^{r} a_j k^j\| \\ 
  \nn & \leq \|\alpha (n + k)^\rho - \sum_{j = 0}^{r} x_j(n) k^j\| + \|\sum_{j = 0}^{r} x_j(n) k^j - \sum_{j = 0}^{r} a_j k^j + \theta\| \\
  \label{eq:defal} & \leq \frac{\eps}{2} + \sum_{j = 0}^{r} \| (x_j(n) - \ti{a}_j) k^j \| \\
  \label{eq:integer} &\leq \frac{\eps}{2} + \sum_{j = 0}^{r} |k^j| \| x_j(n) - \ti{a}_j \| \\
  \nn & \leq \frac{\eps}{2} + \sum_{j = 0}^{r} K^j \frac{\eps}{2 (r+1) K^j}  = \eps,
 \end{align}
 where we used the definition of $\ti{a}_l$ in \eqref{eq:defal},
 and that $k$ is an integer in \eqref{eq:integer}.
 This finishes the proof.
\end{proof}

\section*{Acknowledgments}

I am indebted to J. Chaika, D. Damanik, and A. Metelkina for useful discussions,
and the referees for many useful suggestions on how to improve the presentation.


\begin{thebibliography}{xxxx}

 \bibitem{abd1} A. Avila, J. Bochi, D. Damanik, \textit{Cantor spectrum for Schr\"odinger
 operators with potentials arising from generalized skew-shifts}, Duke. Math. J. (to appear).

 \bibitem{aj} A. Avila, S. Jitomirskaya, \textit{The Ten Martini Problem},
  Ann. Math. (to appear).


 \bibitem{b} J. Bourgain, \textit{Positive Lyapounov exponents for most energies},
   Geometric aspects of functional analysis,  37--66, Lecture Notes in Math. \textbf{1745},
   Springer, Berlin, 2000.

 \bibitem{bbook} J. Bourgain, \textit{Green's function estimates for lattice Schr\"odinger operators and applications},
   Annals of Mathematics Studies, \textbf{158}. Princeton University Press, Princeton, NJ, 2005. x+173 pp.
   
 \bibitem{bosh} M. Boshernitzan, \textit{Uniform distribution and Hardy fields},
   J. Anal. Math. \textbf{62}, 225-240 (1994)

 \bibitem{gf} M. Griniasty, S. Fishman, \textit{Localization by pseudorandom potentials in one dimension}, 
   Phys. Rev. Lett. \textbf{60}, 1334--1337 (1988)
   
 \bibitem{ls} Y. Last, B. Simon, \textit{Eigenfunctions, transfer matrices, and absolutely
 continuous spectrum of one-dimensional Schr\"odinger operators}, Invent. math. \textbf{125},
 329--268 (1999).
 
 \bibitem{s} G. Stolz, \textit{Spectral theory for slowly oscillating potentials. I. Jacobi matrices},
   Manuscripta Math.  \textbf{84},  no. 3-4, 245--260 (1994).
   

\end{thebibliography}
\end{document}